\newtheorem{theorem}{Theorem}
\newtheorem{lemma}{Lemma}
\newtheorem{corollary}{Corollary}
\newtheorem{assumption}{Assumption}
\title{Testing for subsphericity when $n$ and $p$ are of different asymptotic order\tnoteref{t1}}
\author[1]{Joni Virta}
\ead{joni.virta@utu.fi}
\address[1]{Department of Mathematics and Statistics, University of Turku, Finland}
\def\ps@pprintTitle{%
 \let\@oddhead\@empty
 \let\@evenhead\@empty
 \def\@oddfoot{}%
 \let\@evenfoot\@oddfoot}
\begin{document}

\begin{abstract}
We extend a classical test of subsphericity, based on the first two moments of the eigenvalues of the sample covariance matrix, to the high-dimensional regime where the signal eigenvalues of the covariance matrix diverge to infinity and either $p/n \rightarrow 0$ or $p/n \rightarrow \infty$. In the latter case we further require that the divergence of the eigenvalues is suitably fast in a specific sense. Our work can be seen to complement that of Schott (2006) who established equivalent results in the case $p/n \rightarrow \gamma \in (0, \infty)$. As our second main contribution, we use the test to derive a consistent estimator for the latent dimension of the model. Simulations and a real data example are used to demonstrate the results, providing also evidence that the test might be further extendable to a wider asymptotic regime. 
\end{abstract}

\begin{keyword}
Dimension estimation \sep high-dimensional statistics \sep PCA \sep sample covariance matrix \sep  Wishart distribution
\end{keyword}

\maketitle

\section{Introduction}\label{sec:intro}

The objective of principal component analysis (PCA), and dimension reduction in general, is to extract a low-dimensional signal from noise-corrupted observed data. The most basic statistical model for the problem is as follows. Assume that $S_n$ is the sample covariance matrix of a random sample from a $p$-variate normal distribution whose covariance matrix has the eigenvalues $\lambda_1 \geq \cdots \geq \lambda_d > \sigma^2, \ldots, \sigma^2$ exhibiting ``spiked'' structure. The data can thus be seen to be generated by contaminating a random sample residing in a $d$-dimensional subspace with independent normal noise having the covariance matrix $\sigma^2 I_p$. This signal subspace can be straightforwardly estimated with PCA as long as one knows its dimension $d$ which is, however, usually unknown in practice. Numerous procedures for determining the dimension have been proposed, see \cite{jolliffe2002principal} for a review and, e.g.,  \cite{schott2006high,nordhausen2016asymptotic,virta2019estimating} for asymptotic tests and  \cite{beran1985bootstrap,dray2008number,luo2016combining} for bootstrap- and permutation-based techniques. Simplest of these methods is perhaps the test of sub-sphericity based on the test statistics,
\begin{align*}
    T_{n,j} = \frac{m_{2,p-j}(S_n)}{m_{1,p-j}(S_n)^2} - 1, \quad j = 0, \ldots, p - 1,
\end{align*}
where $m_{\ell,r}(A)$ denotes the $\ell$th sample moment of the last $r$ eigenvalues of the symmetric matrix $A$. Under the null hypothesis $H_{0k}: d = k$ that the signal dimension equals $k$, the limiting null distribution of $T_{n,k}$ is
\begin{align}\label{eq:low_dim_convergence}
    \frac{1}{2} n (p - k) T_{n,k} \rightsquigarrow \chi^2_{\frac{1}{2}(p - k)(p - k + 1) - 1},
\end{align}
as $n \rightarrow \infty$, see, e.g., \cite{schott2006high}. Hence, the dimension $d$ can in practice be determined by testing the sequence of null hypotheses $H_{00}, H_{01}, \ldots$ and taking the estimate of $d$ to be the smallest $k$ for which $H_{0k}$ is not rejected. By examining the power of the tests, \cite{nordhausen2016asymptotic} concluded that this procedure yields a consistent estimate of $d$ (with a suitable choice of test levels).

The previous test assumes a fixed dimension $p$ and, in the face of modern large and noisy data sets with great room for dimension reduction, it is desirable to extend the test to the high-dimensional regime where $p = p_n$ is a function of $n$ and we have $p_n \rightarrow \infty$ as $n \rightarrow \infty$. This is discussed in Section \ref{sec:theory} where our first main contribution, extending the test based on  \eqref{eq:low_dim_convergence} to the high-dimensional regime where either the sample size or the dimension asymptotically dominates the other, is also presented. Section \ref{sec:power} introduces our second main contribution, a power study of the test, using which we construct a consistent estimator for the true latent dimension. In Section \ref{sec:simulation} we demonstrate our results using simulations and a real data example and, in Section \ref{sec:discussion}, we finally conclude with some discussion.

\section{High-dimensional testing of subsphericity}\label{sec:theory}

The behaviour of most high-dimensional statistical procedures depends crucially on the interplay between $n$ and $p_n$ and the most common approach in the literature is to assume that their growth rates are proportional in the sense that $p_n/n \rightarrow \gamma \in (0, \infty)$ as $n \rightarrow \infty$, see, e.g., \cite{yao2015sample}. The limiting ratio $\gamma$ is also known as the \textit{concentration} of the regime. In \cite{schott2006high}, the test of subsphericity discussed in Section \ref{sec:intro} is extended to this asymptotic regime under the following two assumptions (note that in Assumption \ref{assu:eigenvalues} the signal dimension $d$ is a constant not depending on $n$).

\begin{assumption}\label{assu:distribution}
The observations $x_1, \ldots , x_n$ are a random sample from $\mathcal{N}_{p_n}(\mu_n, \Sigma_n)$ for some $\mu_n \in \mathbb{R}^{p_n}$ and some positive-definite $\Sigma_n \in \mathbb{R}^{p_n \times p_n}$. 
\end{assumption}

\begin{assumption}\label{assu:eigenvalues}
The eigenvalues of the matrix $\Sigma_n$ are $\lambda_{n1} \geq \cdots \geq \lambda_{nd} > \sigma^2 = \cdots = \sigma^2$ for some $\sigma^2 > 0$. Moreover, the eigenvalues $\lambda_{nk}$, $k = 1, \ldots , d$, satisfy $\lambda_{nk} \rightarrow \infty$.
\end{assumption}

In fact, \cite{schott2006high} additionally required that the quantities $\lambda_{nk}/\mathrm{tr}(\Sigma_n)$ converge to positive constants summing to less than unity, but applying our Lemma \ref{lem:wishart_2} in the proof of their Theorem 4 reveals that this condition is unnecessary, see \ref{sec:schott} for details. Hence, denoting by $S_n$ the sample covariance matrix of the observations, under Assumptions \ref{assu:distribution} and \ref{assu:eigenvalues} and $\gamma \in (0, \infty)\setminus\{ 1\} $ (see \ref{sec:schott} for more details on the exclusion of the case $\gamma = 1$), Theorem~4 in \cite{schott2006high} establishes that the test statistic,
\begin{align*}
    T_{n, j} :=  \frac{m_{2,p_n - j}(S_n)}{m_{1,p_n - j}(S_n)^2} - 1,
\end{align*}
satisfies $ (n - d - 1) T_{n, d} - (p_n - d) \rightsquigarrow \mathcal{N}(1, 4)$ where $d$ is the signal dimension. 
As remarked by \cite{schott2006high}, this limiting result is consistent with its low-dimensional equivalent \eqref{eq:low_dim_convergence} in the sense that, as $p \rightarrow \infty$,
\begin{align*}
    \frac{2}{p - d}\chi^2_{\frac{1}{2}(p - d)(p - d + 1) - 1} - (p - d) \rightsquigarrow \mathcal{N}(1, 4).
\end{align*}

A crucial condition that allows the above limiting result is the divergence of the spike eigenvalues $\lambda_{n1}, \ldots , \lambda_{nd}$ of the covariance matrix to infinity in Assumption \ref{assu:eigenvalues}. Indeed, usually the spikes are taken to be constant in the literature for high-dimensional PCA, see, e.g. \cite{baik2006eigenvalues,johnstone2018pca}. However, requiring the spikes to diverge to infinity is rather natural and reflects the idea that only a few principal components are sufficient to recover a large proportion of the total variance even in high dimensions. See, for example, \cite{yata2018test}, who use cross-data-matrices to detect spiked principal components with divergent variance, and the references therein. 


As our first contribution, we extend the result of \cite{schott2006high} outside of the regime $p_n/n \rightarrow \gamma \in (0, \infty)$, to the extreme cases $\gamma \in \{ 0 , \infty \} $. The latter have been less studied in the high-dimensional literature, but see, for example, \cite{karoui2003largest,birke2005note,yata2009pca,jung2009pca}, the last of which consider the extreme asymptotic scenario where the dimension diverges to infinity but the sample size remains fixed. In our treatment of the case $\gamma = \infty $, we further require the additional condition that $p_n/(n \sqrt{\lambda_{nd}}) \rightarrow 0$ as $n \rightarrow \infty$, i.e., the dimension must not diverge too fast compared to the sample size and the magnitude of the spike $\lambda_{nd}$ corresponding to the weakest signal. Assumptions of this form are rather common in high-dimensional PCA when the spikes are taken to diverge, see, e.g., \cite{shen2016general} who saw $n$, $\lambda_{nk}$ and $p_n$ as three competing forces affecting the consistency properties of PCA, $n$ and $\lambda_{nk}$ contributing information about the signals and $p_n$ decreasing the relative share of information in the sample by introducing more noise to the model. The condition $p_n/(n \sqrt{\lambda_{nd}}) \rightarrow 0$ can thus be interpreted as requiring that even the weakest of the spike principal components has asymptotically strong enough signal to be detected.

The extension of the test to the previous regimes is given below in Theorem~\ref{theo:goes_to_zero}. The main line of proof is based on extending the work of \cite{birke2005note}, who considered testing of sphericity in the cases $\gamma \in \{ 0 , \infty \}$, to testing of subsphericity. In this sense, our work is to \cite{birke2005note} what \cite{schott2006high} is to \cite{ledoit2002some}, who studied tests of sphericity in the case  where $\gamma \in (0, \infty) $ and on whose work \cite{schott2006high} based their proof.

\begin{theorem}\label{theo:goes_to_zero}
Under Assumptions \ref{assu:distribution} and \ref{assu:eigenvalues}, if, as $n \rightarrow \infty$, either
\begin{enumerate}
    \item[i)] $p_n/n \rightarrow 0$, or,
    
    \item[ii)] $p_n/n \rightarrow \infty $ and $p_n/(n \sqrt{\lambda_{nd}}) \rightarrow 0$, then,
\end{enumerate}
\begin{align*}
    (n - d - 1) T_{n, d} - (p_n - d) \rightsquigarrow \mathcal{N}(1, 4). 
\end{align*}
\end{theorem}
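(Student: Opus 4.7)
The plan is to split the test statistic into a pure-noise contribution, handled by a Wishart sphericity CLT in the spirit of \cite{birke2005note}, and a signal perturbation that is rendered negligible by the divergence of the spikes. By orthogonal invariance I may assume $\mu_n = 0$ and $\Sigma_n = \mathrm{diag}(\Lambda_d, \sigma^2 I_{p_n - d})$ with $\Lambda_d = \mathrm{diag}(\lambda_{n1}, \ldots, \lambda_{nd})$, and partition
\[
S_n = \begin{pmatrix} A_n & B_n \\ B_n^\top & C_n \end{pmatrix}
\]
conformally. The block-diagonal form of $\Sigma_n$ makes the signal and noise coordinates of each observation independent, so that $(n-1) C_n \sim \mathcal{W}_{p_n - d}(\sigma^2 I, n-1)$ is a pure Wishart matrix that is independent of $A_n$.

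Letting $\ell_1 \geq \cdots \geq \ell_{p_n}$ be the ordered eigenvalues of $S_n$, I would use the identity $(p_n - d)\, m_{j, p_n - d}(S_n) = \mathrm{tr}(S_n^j) - \sum_{k=1}^d \ell_k^j$ for $j = 1, 2$ to rewrite $T_{n,d}$ in terms of $\mathrm{tr}(S_n)$, $\mathrm{tr}(S_n^2)$ and the top eigenvalues $\ell_1, \ldots, \ell_d$. A Weyl-type perturbation argument combined with an operator-norm bound on $\|B_n\|$ of order $\sqrt{\sigma^2 \lambda_{nd} p_n / n}$ (controllable through Lemma~\ref{lem:wishart_2}) should then give $\ell_k = \lambda_{nk}(1 + o_P(1))$ for $k \leq d$ and, more quantitatively,
\[
\sum_{k=d+1}^{p_n} \ell_k^j - \mathrm{tr}(C_n^j) = o_P\!\left( \frac{p_n - d}{n - d - 1} \right), \qquad j = 1, 2.
\]

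For $C_n$ alone, a joint CLT for $(\mathrm{tr}(C_n), \mathrm{tr}(C_n^2))$, suitably centered and scaled, is available in both regimes $p_n/n \to 0$ and $p_n/n \to \infty$ directly from the Wishart moment computations of \cite{birke2005note}. Applying the delta method to the smooth map $m_2/m_1^2 - 1$ and collecting the centering terms yields
\[
(n - d - 1)\bigl[ m_{2, p_n - d}(C_n) / m_{1, p_n - d}(C_n)^2 - 1 \bigr] - (p_n - d) \rightsquigarrow \mathcal{N}(1, 4),
\]
with the mean $1$ coming from the $O(1/n)$ ratio-of-moments bias and the variance $4$ from the linearization. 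Slutsky's theorem, combined with the perturbation bound in the previous paragraph, then transfers this convergence from $C_n$ to $S_n$ and completes the argument.

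The hardest step is the perturbation control in regime (ii). There, $p_n/n \to \infty$ makes $(p_n - d)/(n - d - 1)$ large, so the error budget in the display is tight: the cross-contribution $\mathrm{tr}(B_n B_n^\top)$ has expectation of order $\sigma^2 \lambda_{nd} p_n / n$ and the perturbation of the top-$d$ eigenvectors of $S_n$ from the signal coordinate subspace is of order $\sqrt{p_n/(n \lambda_{nd})}$; balancing these contributions against the required $o_P((p_n - d)/(n - d - 1))$ rate reveals that $p_n/(n\sqrt{\lambda_{nd}}) \to 0$ is precisely the sharp condition needed. Regime (i) is considerably easier, since the noise block is then only moderately large compared to $n$.
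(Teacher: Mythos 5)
The decomposition you propose --- peel off a pure-noise Wishart block and treat the signal contribution as a perturbation --- is in the same spirit as the paper's proof, but the specific noise matrix you compare against is the wrong one, and the error budget is then unachievable. You compare the bottom $p_n - d$ eigenvalues of $S_n$ to those of the corner block $C_n = S_{n,22}$. Writing $\mathrm{tr}(S_n) = \mathrm{tr}(A_n) + \mathrm{tr}(C_n)$, your $j = 1$ claim is equivalent to $\sum_{k \le d} \ell_k(S_n) - \mathrm{tr}(A_n) = o_P\bigl((p_n-d)/(n-d-1)\bigr)$. But a secular-equation (second-order perturbation) computation in the spiked regime gives $\sum_{k\le d}\ell_k(S_n) - \mathrm{tr}(A_n) \approx \mathrm{tr}(W_{n,12}W_{n,21})$, and by Lemma~\ref{lem:wishart_2} this quantity concentrates at $(p_n-d)/n$, i.e.\ it is $\Theta_P(p_n/n)$ with a \emph{nonvanishing deterministic part}, not $o_P(p_n/n)$. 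Propagated through the delta method this shifts $(n-d-1)T_{n,d} - (p_n-d)$ by an $O(1)$ constant, so your argument would deliver the right variance but the wrong mean. The object that makes the comparison work is the Schur complement $S_{n,22\cdot 1} = C_n - S_{n,21}S_{n,11}^{-1}S_{n,12}$: the subtracted term $\mathrm{tr}(S_{n,21}S_{n,11}^{-1}S_{n,12}) \approx \mathrm{tr}(W_{n,12}W_{n,21})$ cancels the leading perturbation of the top eigenvalues at exactly the right order, and the residual is controlled by the two-sided inequalities of Theorem~3 / Corollary~3 in \cite{schott2006high} (this is \eqref{eq:eigenvalue_moment_convergence} and \eqref{eq:eigenvalue_moment_convergence_2} in the paper).

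There is a second, related bookkeeping gap. Your $C_n$ satisfies $(n-1)C_n \sim \mathcal{W}_{p_n-d}(\sigma^2 I, n-1)$, so the Birke--Dette CLT applied to it carries the prefactor $n-1$, not $n-d-1$. In regime (ii) the discrepancy $d\, T_{n,d} \sim d\, p_n/n \to \infty$, so this is not a Slutsky-negligible adjustment. The Schur complement, by contrast, is $\mathcal{W}_{r_n}\{n_0^{-1}I_{r_n}, n_0 - d\}$ with exactly $n - 1 - d$ degrees of freedom (Theorem~3.4.6 of Mardia et al.), so after rescaling to $G_n := \{n_0/(n_0-d)\}S_{n,22\cdot 1}$ the theorem's prefactor falls out automatically from Theorem~3.7 of \cite{birke2005note}. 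In short, the Schur complement simultaneously fixes the first-order eigenvalue cancellation and the degrees-of-freedom offset; replacing it with the corner block $C_n$ loses both, and the perturbation bound you sketch cannot recover them.
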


\section{Power analysis and dimension estimation}\label{sec:power}

A natural question is whether the test of subsphericity can be used to consistently estimate the latent dimension $d$ under the high-dimensional Gaussian model. In a low-dimensional setting, this is accomplished by chaining together tests for $H_{0k}: d = k$ for different values of $k$ in some specific order: In forward testing one sequentially tests for $H_{00}, H_{01}, \ldots$ and takes as the estimate of $d$ the smallest $k$ for which $H_{0k}$ is not rejected. In backward testing, the order is $H_{0(p-1)}, H_{0(p-2)}, \cdots$ and the estimate is the largest $k$ for which $H_{0(k-1)}$ is rejected. The two strategies can also be combined into a ``divide-and-conquer'' approach where one starts from the middle of the search interval and subsequently halves it with each test, this process often terminating in fewer tests than the forward and backward testing. However, in the high-dimensional setting where our working assumption is that the number of latent signals is diminutive compared to the overall dimensionality (finite $d$ vs. $p_n \rightarrow \infty$), the most economic choice is likely the forward testing. In the following we show that this strategy indeed leads, under suitable assumptions, to a consistent estimate of the dimension $d$ in various high-dimensional regimes. Even though the equivalent of Theorem \ref{theo:goes_to_zero} for $ \gamma \in (0, \infty)\setminus\{ 1\} $ was established already in \cite{schott2006high}, the following results are novel also in that case. We use the notation $g_{n,k} := (n - k - 1) T_{n, k} - (p_n - k)$, $k = 0, \ldots , p_n - 1$, for the test statistic.

\begin{theorem}\label{theo:power_for_small_k}
Under Assumptions \ref{assu:distribution} and \ref{assu:eigenvalues}, if, as $n \rightarrow \infty$, either
\begin{enumerate}
    \item[i)] $p_n/n \rightarrow \gamma \in [0, \infty) \setminus \{ 1 \}$ and $p_n/\lambda_{nd}^2 \rightarrow 0$, or,
    \item[ii)] $p_n/n \rightarrow \infty $, $p_n/(n \sqrt{\lambda_{nd}}) \rightarrow 0$ and $p_n/(\sqrt{n} \lambda_{nd}) \rightarrow 0$, then,
\end{enumerate}
we have, for each $k = 0, \ldots , d - 1$ and for all $ M > 0 $, that
\begin{align*}
    \mathbb{P}( g_{n,k}/n \leq M ) \rightarrow 0.
\end{align*}
\end{theorem}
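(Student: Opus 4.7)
The plan is to view $T_{n,k}$ as (essentially) the squared coefficient of variation of the smallest $p_n - k$ sample eigenvalues of $S_n$. Under the alternative $k < d$, this set contains both the $d - k$ residual spike sample eigenvalues $\ell_{n,k+1}, \ldots, \ell_{n,d}$ and the $p_n - d$ noise sample eigenvalues, and the divergence of the spikes should inflate the CV so much that $(n-k-1)T_{n,k}$ overwhelms $p_n - k$ by a factor tending to infinity.

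Writing $s = d - k$, $r = p_n - d$, $q = p_n - k = s + r$, and splitting the last $q$ eigenvalues into the spike and noise blocks, let $A_n, B_n$ denote the first two sample moments of the residual spikes and $C_n = m_{1,r}(S_n)$, $D_n = m_{2,r}(S_n)$ those of the noise block. A direct expansion gives
\begin{align*}
T_{n,k} = \frac{q(s V_A + r V_C) + s r (A_n - C_n)^2}{(s A_n + r C_n)^2} \geq \frac{s r (A_n - C_n)^2}{(s A_n + r C_n)^2},
\end{align*}
with $V_A = B_n - A_n^2$ and $V_C = D_n - C_n^2$ the non-negative within-block sample variances. The retained bound captures the ``between-block'' heterogeneity and is what the argument will use.

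The two ingredients needed to exploit this bound are (a) $A_n \geq \ell_{n,d} = \lambda_{nd}(1 + o_p(1))$ and (b) $C_n = \sigma^2 + o_p(1)$. For (a) I would rely on spiked-PCA consistency for divergent spikes, obtained via Weyl's inequality and operator-norm control on $S_n - \Sigma_n$ in regime (i) and via existing high-dim results (e.g.\ \cite{shen2016general}) in regime (ii), where the assumption $p_n/(n\sqrt{\lambda_{nd}}) \to 0$ is exactly what is needed for consistency of the $d$-th spike sample eigenvalue. For (b) the key observation is that $\mathrm{tr}(S_n)$ concentrates around $\mathrm{tr}(\Sigma_n) = \sum_j \lambda_{nj} + (p_n - d)\sigma^2$ and the top $d$ sample eigenvalues absorb the spikes to leading order by (a), so $\sum_{j=d+1}^{p_n}\ell_{n,j} = (p_n - d)\sigma^2(1 + o_p(1))$ and hence $C_n \to \sigma^2$ in probability.

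Plugging (a) and (b) into the lower bound, the analysis splits according to whether $\lambda_{nd}$ dominates $p_n$ or not. If $\lambda_{nd} \gtrsim p_n$, the denominator is of order $sA_n \sim \lambda_{nd}$, so $T_{n,k} \gtrsim r/s \asymp p_n$ and $g_{n,k}/n \gtrsim p_n \to \infty$. Otherwise the denominator is of order $rC_n \sim p_n \sigma^2$, giving $T_{n,k} \gtrsim \lambda_{nd}^2/(p_n \sigma^4)$, so $g_{n,k}/n \gtrsim c\lambda_{nd}^2/p_n - p_n/n$. In regime (i) the first term diverges (since $p_n/\lambda_{nd}^2 \to 0$) while the second is bounded, and in regime (ii) the condition $p_n/(\sqrt{n}\lambda_{nd}) \to 0$ rearranges to $n\lambda_{nd}^2/p_n^2 \to \infty$, which is precisely what makes $\lambda_{nd}^2/p_n - p_n/n$ diverge. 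The main obstacle will be step (a) in regime (ii): obtaining quantitative enough control on $\ell_{n,d}$ when $p_n \gg n$ likely requires the Wishart-matrix lemmas developed elsewhere in the paper (the referenced Lemma~\ref{lem:wishart_2} and companions) rather than black-box citations, since one needs not only that $\ell_{n,d}/\lambda_{nd} \to 1$ but enough tightness to propagate through the ratio in the lower bound.
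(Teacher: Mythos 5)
Your decomposition is structurally the same as the paper's: the paper writes $g_{n,k} = g_{n,d} + (k-d) + (n-d-1)(T_{n,k} - T_{n,d})$ and lower-bounds $T_{n,k}-T_{n,d}$ by isolating the contribution of the intermediate eigenvalues $\phi_{k+1}(S_n),\dots,\phi_d(S_n)$, using the power-mean inequality $\sum_{j=k+1}^d\phi_j^2 \geq (d-k)^{-1}(\sum_{j=k+1}^d\phi_j)^2$ and $\phi_d(S_n) \geq \lambda_{nd}\phi_d(W_{n,11})$ with $\phi_d(W_{n,11})\to_p 1$. Your ANOVA (between-block) reformulation
\[
T_{n,k} \geq \frac{sr(A_n - C_n)^2}{(sA_n + rC_n)^2}
\]
captures exactly the same mechanism: your ``$B_n \geq A_n^2$'' is the power-mean inequality, and your ``$A_n \gtrsim \lambda_{nd}$'' is the same eigenvalue lower bound. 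Your version is arguably a bit cleaner because the between-block bound does not involve $m_2$ at all, whereas the paper's $T_{n,k}-T_{n,d}$ route must also track $m_2/m_1^2$, which is awkward in regime (ii) where $m_2 \asymp p_n/n$ rather than a constant.

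Two of your ``to be filled in'' ingredients deserve flags, and they point in opposite directions. For (a), your worry about regime (ii) is misplaced: you do not need any quantitative spiked-PCA consistency result (nor the assumption $p_n/(n\sqrt{\lambda_{nd}})\to 0$) to get the one-sided bound $\phi_d(S_n) \geq \lambda_{nd}(1+o_p(1))$. Cauchy interlacing gives $\phi_d(S_n)\geq\phi_d(S_{n,11})$, and since $S_{n,11}=\Lambda_n^{1/2}W_{n,11}\Lambda_n^{1/2}$ with $W_{n,11}$ a fixed-size $d\times d$ block that converges to $I_d$ in both regimes, $\phi_d(S_{n,11})=\phi_d(\Lambda_n W_{n,11})\geq\lambda_{nd}\phi_d(W_{n,11})$. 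This is elementary and regime-independent; the paper uses exactly this. For (b), on the other hand, your sketch via trace concentration is genuinely flawed. You need $\sum_{j>d}\ell_{n,j} = (p_n-d)\sigma^2(1+o_p(1))$, i.e.\ the error must be $o_p(p_n)$. But the absolute error in the top $d$ sample eigenvalues can be of order $\lambda_{n1}\cdot o_p(1)$, and since nothing in the assumptions bounds $\lambda_{n1}$ relative to $p_n$, this can dominate $p_n$. So ``$\mathrm{tr}(S_n)$ concentrates, and the top $d$ absorb the spikes to leading order'' does not give what you need. The way around this is the Schur-complement argument from the proof of Theorem~\ref{theo:goes_to_zero}: $m_{1,r_n}(S_n) = m_{1,r_n}(S_{n,22\cdot 1}) + o_p(\cdot)$ (equations \eqref{eq:eigenvalue_moment_convergence} and \eqref{eq:eigenvalue_moment_convergence_2}), which decouples the noise-block spectrum from the spikes entirely, together with $m_{1,r_n}(S_{n,22\cdot 1})\to_p 1$ from the Wishart theory. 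If you replace your trace sketch with a citation to that, your proof closes.
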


Theorem \ref{theo:power_for_small_k} shows that the test for $H_{0k}$ is consistent under the alternative hypothesis that the true dimension $d > k$ (the power of the test in the opposite case $d < k$ plays no role in the forward testing and, hence, is not studied here). As a straightforward corollary we then obtain the consistency of the forward testing.

\begin{corollary}\label{cor:dimension_estimation}
Under the assumptions of Theorem \ref{theo:power_for_small_k}, let $c_n$ be any sequence of real numbers satisfying $c_n \rightarrow \infty$ and $c_n = \mathcal{O}(n)$ as $n \rightarrow \infty$. Then,
\begin{align*}
    \hat{d} := \min \{ k = 0, \ldots , p_n - 1: g_{n, k} \leq c_n \} \rightarrow_p d.
\end{align*}
\end{corollary}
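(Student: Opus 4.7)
The plan is to show $\mathbb{P}(\hat d = d) \to 1$, which, because $\hat d$ takes only integer values, is equivalent to $\hat d \to_p d$. By the definition of $\hat d$, the event $\{\hat d = d\}$ coincides with
\[
\bigl\{g_{n,k} > c_n \text{ for all } k = 0, \ldots, d-1\bigr\} \cap \{g_{n,d} \leq c_n\},
\]
so by a union bound it suffices to establish (a) $\mathbb{P}(g_{n,k} \leq c_n) \to 0$ for each fixed $k = 0, \ldots, d-1$, and (b) $\mathbb{P}(g_{n,d} > c_n) \to 0$.

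For (a), I would use $c_n = \mathcal{O}(n)$ to pick a constant $M > 0$ with $c_n \leq Mn$ for all sufficiently large $n$, so that $\mathbb{P}(g_{n,k} \leq c_n) \leq \mathbb{P}(g_{n,k}/n \leq M)$; the right-hand side vanishes by Theorem \ref{theo:power_for_small_k}, and a union bound over the finite set $k \in \{0, \ldots, d-1\}$ concludes. For (b), I would note that the hypotheses of Theorem \ref{theo:power_for_small_k} always imply those of Theorem \ref{theo:goes_to_zero} (when $\gamma \in \{0, \infty\}$) or reduce to the regime $p_n/n \to \gamma \in (0,\infty)\setminus\{1\}$ treated by \cite{schott2006high}, so in every covered case $g_{n,d} \rightsquigarrow \mathcal{N}(1,4)$, whence $g_{n,d} = \mathcal{O}_p(1)$. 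Combined with $c_n \to \infty$, this yields $\mathbb{P}(g_{n,d} > c_n) \to 0$.

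I expect no real obstacle: the corollary is essentially a bookkeeping consequence of the two theorems, with the growth conditions $c_n \to \infty$ and $c_n = \mathcal{O}(n)$ chosen precisely so that the threshold $c_n$ sits between the bounded null behaviour of $g_{n,d}$ and the at-least-linear (in $n$) divergence of $g_{n,k}$ for $k < d$ supplied by Theorem \ref{theo:power_for_small_k}. The only point that requires any care is verifying the implication chain among the various asymptotic regimes in (b), which is routine.
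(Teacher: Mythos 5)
Your argument is exactly the paper's: decompose $\{\hat d = d\}$ into $\bigcap_{k<d}\{g_{n,k}>c_n\}\cap\{g_{n,d}\leq c_n\}$, apply a union bound, then use $c_n=\mathcal{O}(n)$ together with Theorem~\ref{theo:power_for_small_k} to kill each $\mathbb{P}(g_{n,k}\leq c_n)$ for $k<d$, and $c_n\to\infty$ together with the tightness $g_{n,d}=\mathcal{O}_p(1)$ to kill $\mathbb{P}(g_{n,d}>c_n)$. Your explicit note that the tightness of $g_{n,d}$ in the regime $\gamma\in(0,\infty)\setminus\{1\}$ comes from Schott's Theorem~4 rather than from Theorem~\ref{theo:goes_to_zero} (which the paper cites a bit loosely for all regimes) is a small precision the paper elides, but the substance and structure of the proof are identical.
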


Choosing a sequence $c_n$ for which the forward testing estimator $\hat{d}$ performs well in finite samples is a highly non-trivial task and, thus, we advocate using in practice the alternative estimator,
\begin{align}\label{eq:practical_estimator}
    \hat{d} := \min \{ k = 0, \ldots , p_n - 1: | (g_{n, k} - 1)/2 | \leq z_{1-\alpha/2} \},
\end{align}
where $z_{1-\alpha/2}$ is the upper $\alpha/2$ quantile of the standard normal distribution, see, e.g., \cite{nordhausen2016asymptotic} for a similar modification. The resulting procedure has asymptotically zero probability to underestimate the dimension (by Theorem \ref{theo:goes_to_zero}) and carries the Type I error probability equal to $\alpha$ of overestimating the dimension (by Theorem \ref{theo:power_for_small_k}). 

Finally, we still briefly discuss the assumptions of Corollary \ref{cor:dimension_estimation} which, while stricter than in Theorem \ref{theo:goes_to_zero}, can nevertheless be seen to be very natural. That is, regardless of the regime, the assumptions ask that the weakest of the signals is strong enough not to be masked by the noise (similarly as in part \textit{ii)} of Theorem~\ref{theo:goes_to_zero}). To gain a more concrete idea on the severity of the assumptions, let $p_n = c n^\alpha$ and $\lambda_{nd} = n^\beta$ for some $c \neq 1$ and $\alpha, \beta > 0$. Then, the feasible values of $(\alpha, \beta)$ form a polygon in $\mathbb{R}^2$ that is illustrated in the range $0 < \alpha \leq 2$ as the grey area in Figure~\ref{fig:rev_plot_1}. The plot reveals the intuitive fact that the effect of the dimension on the minimal feasible growth rate for the signal is the stronger the faster the dimension increases (the slope of the curve is for $\alpha > 1.5$ four times higher than for $\alpha \in (0, 1)$).

\begin{figure}
    \centering
    \includegraphics[width = 0.75\textwidth]{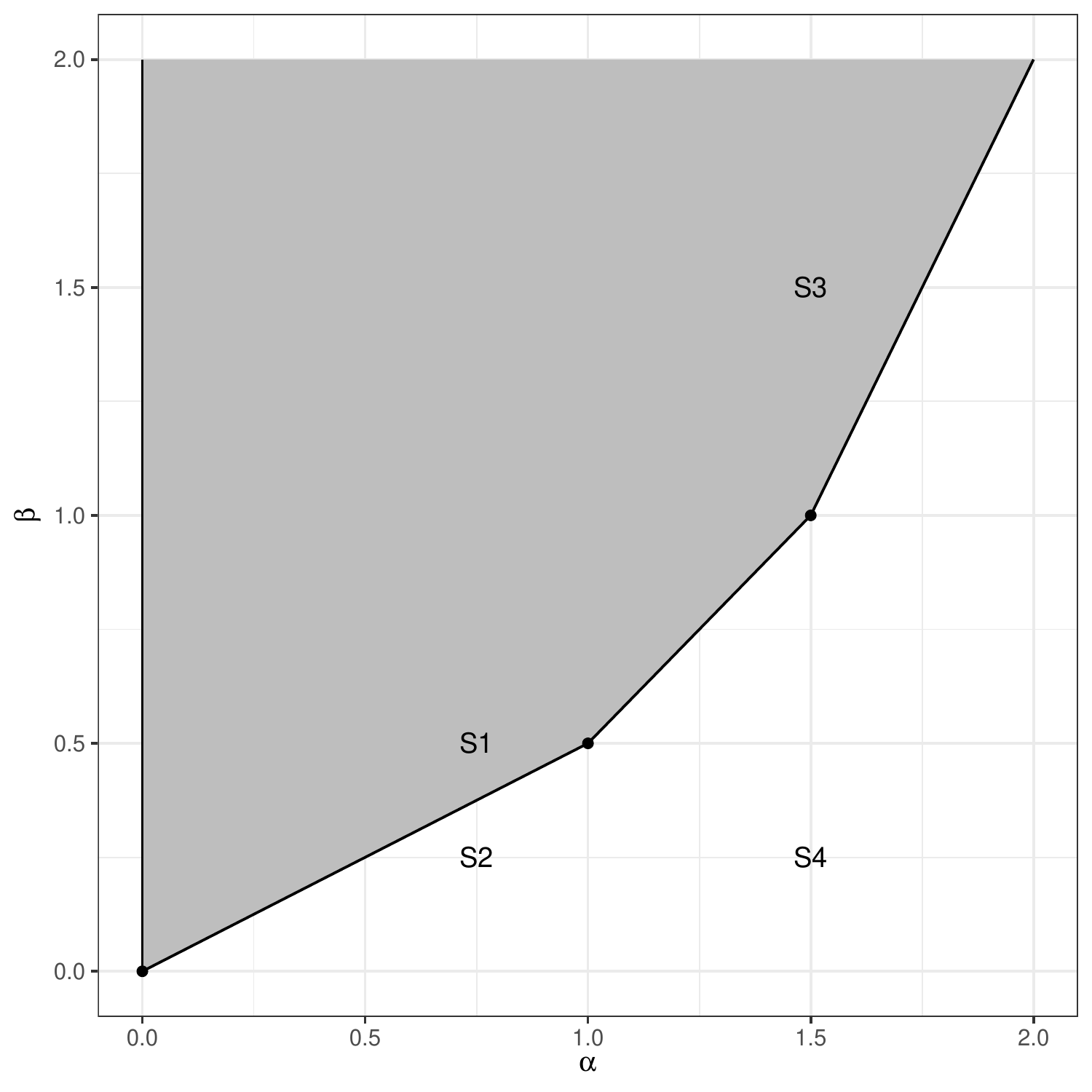}
    \caption{Assuming $p_n = c n^\alpha$ and $\lambda_{nd} = n^\beta$ for some $c \neq 1$ and $\alpha, \beta > 0$, the grey area in the plot contains the values of $(\alpha, \beta)$ for which the assumptions of Corollary \ref{cor:dimension_estimation} hold. The points S1--S4 correspond to the four settings used in the simulation study in Section \ref{sec:simulation}.} 
    \label{fig:rev_plot_1}
\end{figure}

\section{Numerical examples}\label{sec:simulation}

We first demonstrate the result of Theorem \ref{theo:goes_to_zero} using simulated data. We consider four different settings, each of which assumes a sample of size $n$ from $\mathcal{N}_{p_n}(0, \Sigma_{n})$ where $\Sigma_n = \mathrm{diag}(\lambda_{n1}, \ldots , \lambda_{nd}, 1, \ldots , 1)$. Note that this simplified form of the normal distribution (zero location, unit noise variance and diagonal covariance) is without loss of generality as our test statistic is location, scale and rotation invariant. The settings are as follows:
\begin{enumerate}
    \item $d = 3$, $n = 21$6, $p_n = n^{3/4}$, $\lambda_{n1} = 3 n$, and $\lambda_{n2} = \lambda_{n3} = n^{1/2}$,
    \item $d = 3$, $n = 216$, $p_n = n^{3/4}$, $\lambda_{n1} = 3 n^{1/2}$, and $\lambda_{n2} = \lambda_{n3} = n^{1/4}$,
    \item $d = 2$, $n = 36$, $p_n = n^{3/2}$, $\lambda_{n1} = 2 n^2$ and $\lambda_{n2} = n^{3/2}$,
    \item $d = 2$, $n = 36$, $p_n = n^{3/2}$, $\lambda_{n1} = 2 n^2$ and $\lambda_{n2} = n^{1/4}$.
\end{enumerate}
Settings 1 and 2 fall within the case $\gamma = 0$, and their only difference is in the growth rates of the spikes. Settings 3 and 4 explore the case $\gamma = \infty$, the former satisfying the conditions of Theorem \ref{theo:goes_to_zero} and the latter not (again the only difference between them is in the growth rates of the spikes). In each case, we compute 10000 replicates of the test statistic $g_{n,d} = (n - d - 1) T_{n, d} - (p_n - d)$ and plot the obtained histogram superimposed with the density of the limiting distribution $\mathcal{N}(1, 4)$.

\begin{figure}
    \centering
    \includegraphics[width = 1\textwidth]{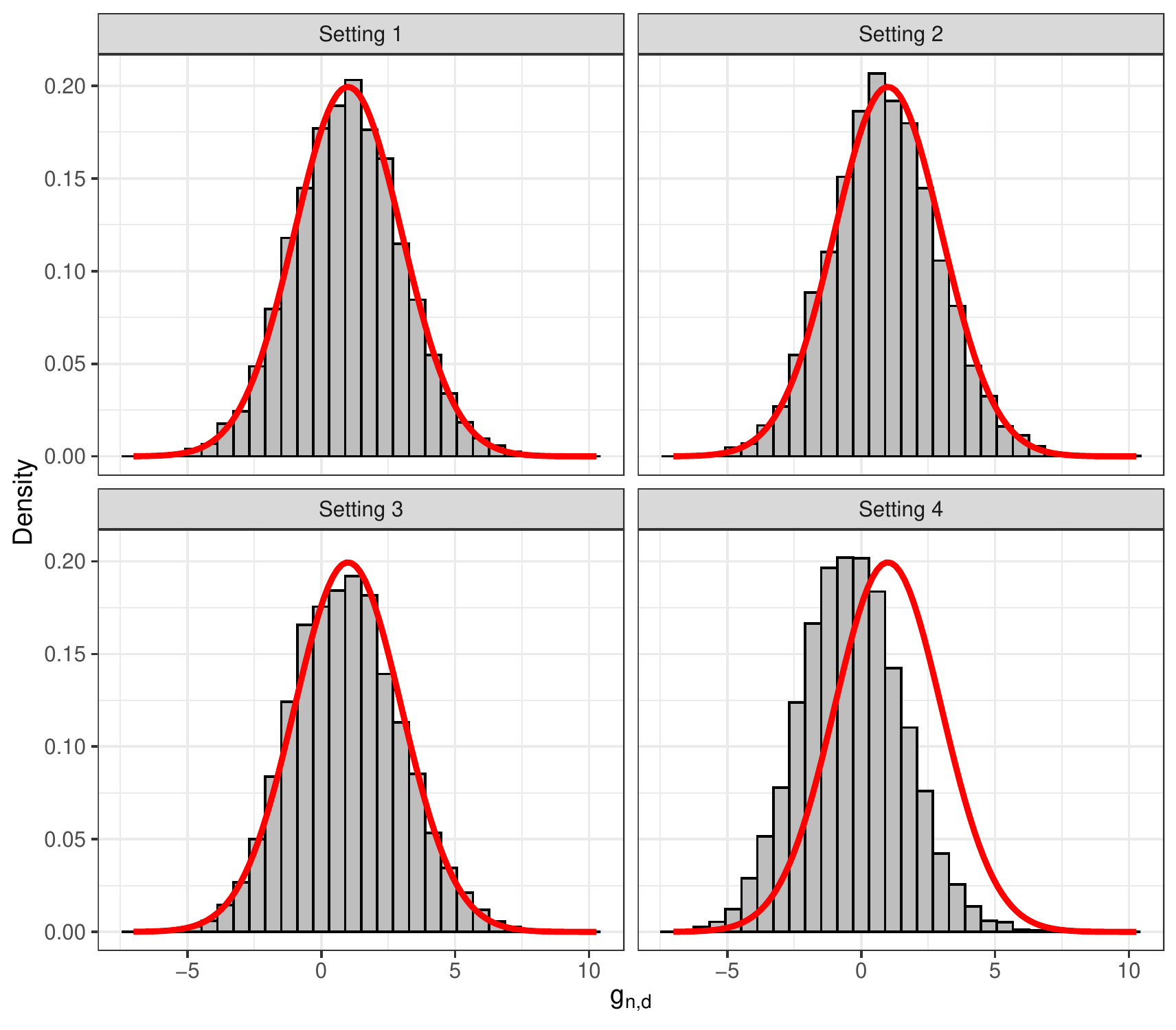}
    \caption{The histograms of 10000 independent replicates of the test statistic $g_{n, d} = (n - d - 1) T_{n, d} - (p_n - d)$ under the four different settings, with the density of the limiting distribution $\mathcal{N}(1, 4)$ overlaid.} 
    \label{fig:rev_plot_2}
\end{figure}

The results are shown in Figure \ref{fig:rev_plot_2} where we immediately make three observations: the convergence to the limiting distribution is (at least visually) rather fast in Settings 1--3, with the histograms exhibiting the Gaussian shape and being only slightly shifted to the left from their limiting density; Setting 1 does not appear to be significantly closer to Gaussianity than Setting 2 despite the increased amount of information in the former (in the form of more rapidly growing spike eigenvalues); in Setting 4 where the condition $p_n/(n \sqrt{\lambda_{nd}}) \rightarrow 0$ required by Theorem \ref{theo:goes_to_zero} is being violated, the histogram visibly has the correct shape and scale, but clearly underestimates the location. The difference between the true mean and the mean of the replicates in Setting 4 is approximately 1.35 and some testing (not shown here) reveals that, at least with the current parameter choices, the difference seems to stay roughly constant when $n$ is increased. Based on this, it seems possible that, even when $p_n/(n \sqrt{\lambda_{nd}}) \nrightarrow 0$, the limiting distribution of $g_{n,d}$ could be made to equal $\mathcal{N}(1, 4)$ with a suitable additive correction term $a_n$, which vanishes, $a_n \rightarrow 0$ as $n \rightarrow \infty$, when the conditions of Theorem \ref{theo:goes_to_zero} are satisfied.

Next, we demonstrate how forward testing, as defined in \eqref{eq:practical_estimator}, can be used to estimate the signal dimension $d$ with a chain of hypothesis tests for the null hypotheses $H_{0k}: d = k$. That is, we sequentially test the null hypotheses $H_{00}, H_{01}, \ldots$ using, respectively, the test statistics $g_{n, 0}, g_{n, 1}, \ldots$ and take our estimate of the dimension to be the smallest $k$ for which $H_{0k}$ is not rejected. For each test, we use $\alpha = 0.05$, i.e., the two-sided $95\%$ critical regions of the limiting $\mathcal{N}(1, 4)$-distribution. We consider the same four settings as in the first simulation, but include an additional, larger sample size for each. Of the four settings, only the first and the third satisfy the assumptions of Corollary \ref{cor:dimension_estimation}, see Figure \ref{fig:rev_plot_1} on how the four settings are located with respect to the ``feasibility region'' of the assumptions.

For simplicity, we report in Table \ref{tab:results_1} the rejection rates (over 10000 replicates) of the null hypotheses corresponding to the true dimension and the neighbouring dimensions only (the columns corresponding to the true dimension are shaded grey). In Settings 1 and 3 where the assumptions of Corollary \ref{cor:dimension_estimation} are satisfied, the test achieves rather accurately the nominal level at the true dimension and shows extremely good power at the smaller dimensions, as expected. Interestingly, the same conclusions are reached also in Setting 2 where the assumptions of Corollary \ref{cor:dimension_estimation} are not satisfied, implying that the assumptions, while sufficient, are not necessary for the consistency of the forward testing estimator. Finally, as expected, the procedure reaches neither a sufficient level nor power in Setting~4 where the conditions of Theorem \ref{theo:goes_to_zero} and Corollary \ref{cor:dimension_estimation} are not satisfied.

\begin{table}[ht]
\caption{The subtables give the observed rejection rates for different null hypotheses over 10000 independent replicates under each of the four settings. Two different sample sizes are considered for each setting. The columns corresponding to the true dimension are shaded grey.} 
\label{tab:results_1}
\centering
\begin{minipage}{.45\linewidth}
\begin{tabular}{p{0.6cm}ccc}
  \multicolumn{4}{c}{Setting 1}\\
  \hline
$n$ & $H_{02}$ & $H_{03}$ & $H_{04}$ \\ 
  \hline
216 & 1.000 & \cellcolor{black!15} 0.053 & 0.115 \\ 
  512 & 1.000 & \cellcolor{black!15} 0.051 & 0.138 \\ 
   \hline
\end{tabular}
\end{minipage}
\begin{minipage}{.45\linewidth}
\begin{tabular}{p{0.6cm}ccc}
  \multicolumn{4}{c}{Setting 2}\\
  \hline
$n$ & $H_{02}$ & $H_{03}$ & $H_{04}$ \\ 
  \hline
216 & 1.000 & \cellcolor{black!15} 0.054 & 0.122 \\ 
  512 & 1.000 & \cellcolor{black!15} 0.051 & 0.131 \\ 
   \hline
\end{tabular}
\end{minipage}
\begin{minipage}{.45\linewidth}
\begin{tabular}{p{0.6cm}ccc}
  \multicolumn{4}{c}{Setting 3}\\
  \hline
$n$ & $H_{01}$ & $H_{02}$ & $H_{03}$ \\  
  \hline
36 & 1.000 & \cellcolor{black!15} 0.051 & 0.102 \\ 
  64 & 1.000 & \cellcolor{black!15} 0.053 & 0.124 \\ 
   \hline
\end{tabular}
\end{minipage}
\begin{minipage}{.45\linewidth}
\begin{tabular}{p{0.6cm}ccc}
  \multicolumn{4}{c}{Setting 4}\\
  \hline
$n$ & $H_{01}$ & $H_{02}$ & $H_{03}$ \\  
  \hline
36 & 0.059 & \cellcolor{black!15} 0.091 & 0.211 \\ 
  64 & 0.058 & \cellcolor{black!15} 0.093 & 0.261 \\ 
   \hline
\end{tabular}
\end{minipage}
\end{table}

We conclude with a brief application of the procedure to the \texttt{phoneme} data set in the R-package \texttt{ElemStatLearn} \citep{RElemStatLearn}. The data consists of a total of $4509$ log-periodograms of length $p = 256$, each corresponding to a single utterance of one of several phonemes. For simplicity, we consider only the phoneme ``sh'' and, moreover, take only the first utterances of it by the first 64 speakers in the data set. This yields a data matrix with the dimensions $n = 64$ and $p = 256$, meaning that the experiment can be embedded, for example, to either of the regimes $p_n = 4n$ and $p_n = n^{4/3}$. To gain some idea on the possible Gaussianity of the data, we ran separate univariate Shapiro-Wilk tests for each of the $p$ variables using the Bonferroni correction and the significance level 0.05. Based on the tests, 4 out of the 256 variables were deemed as non-normal, implying that the assumption of Gaussianity might indeed be warranted in the current context. 

We then applied the forward testing estimator \eqref{eq:practical_estimator} with $\alpha = 0.05$ to the data and obtained the estimate $\hat{d} = 14$, implying that there is indeed great room for dimension reduction in the data set. As an alternative, ``naive'' approach we also considered forward testing based on a sequence of tests of the form \eqref{eq:low_dim_convergence} that assume $p$ to be finite. It turned out that each of the tests was rejected (with $\alpha = 0.05$), giving the maximal estimate $\hat{d} = \min\{n, p\} = 64$. As the sample size is most likely too small for the finite-dimension asymptotics to kick in (unlike for the high-dimensional asymptotics, which are in Table~\ref{tab:results_1} seen to be good approximations already for sample sizes and dimensions comparable to the current situation), we conclude that ignoring the high-dimensional nature of the data led to a gross overestimation of the latent dimension.

\section{Discussion}\label{sec:discussion}

In this short note, we showed that a classical test of subsphericity is valid also in the less often studied high-dimensional Gaussian regimes where the concentration $\gamma$ is allowed to take the extreme values $0$ and $\infty$, as long as the spikes themselves diverge to infinity. The case $\gamma = \infty$ further requires the condition that $p_n/(n \sqrt{\lambda_{nd}}) \rightarrow 0$, limiting the growth rate of the dimension $p_n$ in terms of the signal strength $\lambda_{nd}$. And even though, by our simulation study, it seems plausible that the test could be extended outside of this condition, several key arguments in our proof of Theorem~\ref{theo:goes_to_zero} hinge on it, meaning that any extensions should use a different technique of proof. 

Additionally, we derived sufficient conditions for the consistent estimation of the latent dimension $d$ with the forward testing procedure that chains together tests for the hypotheses $H_{00}, H_{01}, \ldots$. While the conditions are rather natural, again requiring that $p_n$ does not grow too fast compared to $\lambda_{nd}$, our simulation study gives indication that there is still room for improvement.

Finally, the main limiting factor of the presented results is the assumption of Gaussianity. This requirement could possibly be weakened by showing that the so-called \textit{universality phenomenon} applies to our scenario; in high-dimensional statistics, a result derived under the Gaussian assumption  is said to exhibit universality if it continues to hold when the normal distribution is replaced with some other distribution that is close to it in some suitable sense, see \cite{johnstone2018pca} for a review of such results. In the current situation concerning the limiting behavior of second-order quantities, it seems reasonable to conjecture that our main results continue to hold if the normal distribution is replaced with a distribution that shares its first four moments with the normal distribution. While the actual theoretical study of this claim goes beyond the scope of the current work (our proofs rely heavily on several pre-existing results for Wishart matrices), we nevertheless did quick experiments in Settings 1-4 described in Section \ref{sec:simulation}, with the normal distribution replaced by the symmetric Laplace mixture $(1/2) \mathcal{L}(-\mu, b) + (1/2) \mathcal{L}(\mu, b)$ having the dispersion parameter $b = \sqrt{3/2} - 1$ and the mean $\mu = \sqrt{1 - 2 b^2}$. The resulting distribution then has identical moments with the standard normal up to the fourth one. The resulting rejection rates are shown in Table \ref{tab:results_2} and they indeed match very closely with those in Table \ref{tab:results_1}, giving plausibility to the universality claim.

\begin{table}[h]
\caption{The subtables give the observed rejection rates for different null hypotheses over 10000 independent replicates under each of the four settings when the data are drawn from the symmetric Laplace mixture. The columns corresponding to the true dimension are shaded grey.} 
\label{tab:results_2}
\centering
\begin{minipage}{.45\linewidth}
\begin{tabular}{p{0.6cm}ccc}
  \multicolumn{4}{c}{Setting 1}\\
  \hline
$n$ & $H_{02}$ & $H_{03}$ & $H_{04}$ \\ 
  \hline
216 & 1.000 & \cellcolor{black!15} 0.055 & 0.121 \\ 
  512 & 1.000 & \cellcolor{black!15} 0.054 & 0.137 \\ 
   \hline
\end{tabular}
\end{minipage}
\begin{minipage}{.45\linewidth}
\begin{tabular}{p{0.6cm}ccc}
  \multicolumn{4}{c}{Setting 2}\\
  \hline
$n$ & $H_{02}$ & $H_{03}$ & $H_{04}$ \\ 
  \hline
  216 & 1.000 & \cellcolor{black!15} 0.055 & 0.124 \\ 
  512 & 1.000 & \cellcolor{black!15} 0.054 & 0.138 \\ 
   \hline
\end{tabular}
\end{minipage}
\begin{minipage}{.45\linewidth}
\begin{tabular}{p{0.6cm}ccc}
  \multicolumn{4}{c}{Setting 3}\\
  \hline
$n$ & $H_{01}$ & $H_{02}$ & $H_{03}$ \\  
  \hline
36 & 1.000 & \cellcolor{black!15} 0.052 & 0.109 \\ 
  64 & 1.000 & \cellcolor{black!15} 0.049 & 0.128 \\ 
   \hline
\end{tabular}
\end{minipage}
\begin{minipage}{.45\linewidth}
\begin{tabular}{p{0.6cm}ccc}
  \multicolumn{4}{c}{Setting 4}\\
  \hline
$n$ & $H_{01}$ & $H_{02}$ & $H_{03}$ \\  
  \hline
36 & 0.057 & \cellcolor{black!15} 0.091 & 0.213 \\ 
  64 & 0.061 & \cellcolor{black!15} 0.090 & 0.253 \\ 
   \hline
\end{tabular}
\end{minipage}
\end{table}


\appendix

\section{Discussion of Theorem 4 in \cite{schott2006high}}\label{sec:schott}

For convenience, this section uses the notation of \cite{schott2006high}. We first show that the final part of Condition 2 in \cite{schott2006high}, assuming that $\lim_{k \rightarrow \infty} \lambda_{i, k}/\mathrm{tr}(\Sigma_k) = \rho_i \in (0, 1)$, $i = 1, \ldots , q$, and that $\sum_{i=1}^q \rho_i \in (0, 1)$, is actually not necessary for their Theorem 4. This condition is used both in equation (22) and in the equation right after (24) to guarantee that $m/\lambda_q = \mathcal{O}(1)$. This, in conjunction with the observation that $\mathrm{tr}(W_{12} W_{12}') = o_p(m)$, then gives the relation,
\begin{align*}
    \frac{1}{\lambda_q}\mathrm{tr}(W_{12} W_{12}') = \frac{m}{\lambda_q} o_p(1) = o_p(1),
\end{align*}
used in bounding the moments. However, the same relation follows directly from the divergence of the spike eigenvalues $\lambda_j$ by first observing that, by the proof of our Lemma \ref{lem:wishart_2}, we have $\mathrm{tr}(W_{12} W_{12}') = q c + o_p(1) $, where $c \in (0, \infty)$ is the limit of $p/n$. Note also that, to obtain the final bound in the equation right after (24) without assuming anything about the relative growth rates of the spikes, we use the bound $\| \Sigma_*^{-1} \| \leq \lambda_q^{-1} \lambda_q \mathrm{tr}(\Sigma_*^{-1}) \leq \lambda_q^{-1} q$ (which is valid simply by the ordering of the spike eigenvalues). Thus, the result of Theorem~4 can be obtained without the final part of Condition~2 in \cite{schott2006high}.

Additionally, we remark that, by what appears to be an oversight, the proof of Theorem 4 in \cite{schott2006high} does not hold as such in the case where $p/n \rightarrow c = 1$. Namely, in equation (22) and in the equation right after (24), the upper bounds involve the term $\phi_r^{-1}(S_{22 \cdot 1})$, which converges in probability to $(1 - c^{1/2})^{-2}$ which fails to be finite when $c = 1$. It seems to us that introducing some additional (non-trivial) assumptions on the spike eigenvalues could possibly recover the proof for $c = 1$ as, indeed, the simulations in \cite{schott2006high} suggest that the result of Theorem 4 holds in that case also. 

\section{Proofs}

Before the proof of Theorem \ref{theo:goes_to_zero} we establish an auxiliary lemma.

\begin{lemma}\label{lem:wishart_2}
    Let $W_n \sim \mathcal{W}_{p_n}(I_{p_n}/n, n)$ be partitioned as
    \begin{align*}
    W_n = \begin{pmatrix}
    W_{n,11} & W_{n,12} \\
    W_{n,21} & W_{n,22}
    \end{pmatrix},
    \end{align*}
    where the block $W_{n,11}$ has the size $d \times d$ and $\mathcal{W}_p(\Sigma, \nu)$ denotes the $(p \times p)$-dimensional Wishart distribution with the scale matrix $\Sigma$ and $\nu$ degrees of freedom. Then, as $n, p_n \rightarrow \infty$,
    \begin{enumerate}
        \item if $p_n/n \rightarrow 0$, we have,
            \begin{align*}
        \mathrm{tr}( W_{n,12} W_{n,21} ) = \mathcal{O}_p\left( \frac{p_n}{n} \right),
    \end{align*}
        \item if $p_n/n \rightarrow \infty $ and $p_n/(n \sqrt{\lambda_{n}}) \rightarrow 0$ for some sequence $\lambda_n \rightarrow \infty$, we have,
        \begin{align*}
        \mathrm{tr}( W_{n,12} W_{n,21} ) = o_p\left( \sqrt{ \lambda_{n} } \right).
    \end{align*}
    \end{enumerate}
    
\end{lemma}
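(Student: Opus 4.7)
The plan is to use the Gaussian representation $W_n = n^{-1} X X^T$, where $X \in \mathbb{R}^{p_n \times n}$ has i.i.d.\ $\mathcal{N}(0,1)$ entries, and to partition $X$ conformably with $W_n$ as $X = (X_1^T, X_2^T)^T$, with $X_1$ of size $d \times n$ and $X_2$ of size $(p_n - d) \times n$. Then $W_{n,12} = n^{-1} X_1 X_2^T$ and the quantity of interest becomes
\begin{align*}
    T_n := \mathrm{tr}(W_{n,12} W_{n,21}) = \frac{1}{n^2}\, \mathrm{tr}(X_1 X_2^T X_2 X_1^T).
\end{align*}

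The key step is to condition on $X_1$: since the rows of $X_2$ are independent $\mathcal{N}_n(0, I_n)$ vectors that are independent of $X_1$, the rows of $X_2 X_1^T$ are, conditionally on $X_1$, i.i.d.\ $\mathcal{N}_d(0, X_1 X_1^T)$, whence
\begin{align*}
    X_1 X_2^T X_2 X_1^T \mid X_1 \, \sim \, \mathcal{W}_d(X_1 X_1^T,\, p_n - d).
\end{align*}
Standard Wishart moment identities then give, in closed form,
\begin{align*}
    \mathbb{E}[T_n \mid X_1] = \frac{p_n - d}{n^2}\, \mathrm{tr}(X_1 X_1^T), \qquad \mathrm{Var}(T_n \mid X_1) = \frac{2(p_n - d)}{n^4}\, \mathrm{tr}\!\bigl((X_1 X_1^T)^2\bigr).
\end{align*}
Because $d$ is fixed while $n \to \infty$, $\mathrm{tr}(X_1 X_1^T) \sim \chi^2_{nd}$ and $n^{-1} X_1 X_1^T \to I_d$ almost surely by the law of large numbers; in particular $\mathrm{tr}((X_1 X_1^T)^2) = \mathcal{O}_p(n^2)$ from the operator-norm bound on a fixed-dimensional Wishart. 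The law of total variance then yields
\begin{align*}
    \mathbb{E}[T_n] = \frac{d(p_n - d)}{n}, \qquad \mathrm{Var}(T_n) = \mathcal{O}\!\left( \frac{p_n}{n^2} \right) + \mathcal{O}\!\left( \frac{p_n^2}{n^3} \right).
\end{align*}

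Both cases of the lemma then reduce to Chebyshev's inequality. In case (i), $p_n/n \to 0$ makes the mean of order $p_n/n$ and the standard deviation of strictly smaller order, so $T_n = \mathcal{O}_p(p_n/n)$. In case (ii), $p_n/n \to \infty$ makes the variance of order $p_n^2/n^3$ dominant, whence $T_n = (p_n/n)\bigl(1 + \mathcal{O}_p(n^{-1/2})\bigr)$; the hypothesis $p_n/(n\sqrt{\lambda_n}) \to 0$ then translates directly into $T_n = o_p(\sqrt{\lambda_n})$.

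I do not anticipate any substantial obstacle: once the conditional Wishart structure of $X_1 X_2^T X_2 X_1^T$ given $X_1$ is identified, the lemma is a routine moment computation followed by Chebyshev. The only care needed is to verify that $\mathrm{tr}((X_1 X_1^T)^2) = \mathcal{O}_p(n^2)$, which is straightforward since $X_1 X_1^T$ is a $d \times d$ Wishart with fixed $d$ and $n^{-1} X_1 X_1^T \to I_d$.
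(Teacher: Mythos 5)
Your proof is correct, and the argument is essentially sound: identifying the conditional Wishart law $X_1 X_2^{T} X_2 X_1^{T} \mid X_1 \sim \mathcal{W}_d(X_1 X_1^{T},\, p_n - d)$ is a valid observation, the trace moments $\mathbb{E}[\mathrm{tr}(W)] = \nu\,\mathrm{tr}(\Sigma)$ and $\mathrm{Var}[\mathrm{tr}(W)] = 2\nu\,\mathrm{tr}(\Sigma^2)$ are standard, the total-variance decomposition gives the stated orders (using $\mathbb{E}[\mathrm{tr}((X_1X_1^{T})^2)] = \mathcal{O}(n^2)$, not just $\mathcal{O}_p$, since $X_1 X_1^{T} \sim \mathcal{W}_d(I_d, n)$ with fixed $d$), and the final step is Chebyshev. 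This is a genuinely different route from the paper's. The paper works entry-by-entry: it writes out $y_{n,jj} = \sum_{k>d}\bigl(n^{-1}\sum_i z_{ij}z_{ik}\bigr)^2$, expands the second moment with Isserlis' theorem into Kronecker-delta sums, and reads off the variance of each diagonal entry before summing over $j = 1,\ldots,d$; it then treats case (ii) with a terse ``verified in a like manner.'' Your conditional-Wishart reduction bypasses the Isserlis bookkeeping entirely, yields closed-form moments of the full trace $T_n$ in one stroke, and makes the case distinction transparent since both cases follow from a single mean and variance formula. The only small slips are cosmetic: the centered fluctuation in case (ii) is $T_n = (d p_n/n)(1 + \mathcal{O}_p(n^{-1/2}))$ (you dropped the constant $d$, which is harmless), and in case (i) Markov's inequality alone suffices since $T_n \geq 0$ with $\mathbb{E}[T_n] = \mathcal{O}(p_n/n)$, making the variance bound unnecessary there.
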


\begin{proof}[Proof of Lemma \ref{lem:wishart_2}]
    The matrix $ W_n $ has the same distribution as the (biased) non-centered sample covariance matrix of a random sample $ z_1, \ldots , z_n $ from the $ p_n $-variate standard normal distribution. Hence, letting $Y_n := W_{n,12} W_{n,21} $, we have, for arbitrary $ j = 1, \ldots , d $, that
\begin{align*}
    y_{n,jj} = \sum_{k = d + 1}^{p_n} \left( \frac{1}{n} \sum_{i = 1}^{n} z_{ij} z_{ik} \right)^2
\end{align*} 
The expected value of $ y_{n,jj} $ is
\begin{align*} 
\mathbb{E} (y_{n,jj}) = \frac{1}{n^2}  \sum_{k = d + 1}^{p_n}\sum_{i = 1}^{n} \sum_{\ell = 1}^{n}  \mathbb{E} (z_{ij} z_{ik} z_{\ell j} z_{\ell k})
=  \frac{p_n - d}{n}.
\end{align*}
Whereas, its second moment is 
\begin{align*} 
\mathbb{E} (y_{n,jj}^2) = & \frac{1}{n^4} \sum_{k = d + 1}^{p_n} \sum_{k' = d + 1}^{p_n} \sum_{i = 1}^{n} \sum_{i' = 1}^{n} \sum_{\ell = 1}^{n} \sum_{\ell' = 1}^{n}  \mathbb{E} (z_{ij} z_{ik} z_{\ell j} z_{\ell k} z_{i'j} z_{i'k'} z_{\ell' j} z_{\ell' k'}) \\
= & \frac{1}{n^4} \sum_{k k' i i' \ell \ell'} \mathbb{E} (z_{ij} z_{\ell j} z_{i'j} z_{\ell' j} ) \mathbb{E} (z_{ik} z_{\ell k} z_{i'k'} z_{\ell' k'} ) \\
= & \frac{1}{n^4} \sum_{k k' i i' \ell \ell'} (\delta_{i \ell} \delta_{i' \ell'} + \delta_{i i'} \delta_{\ell \ell'} + \delta_{i \ell'} \delta_{\ell i'}) (\delta_{i \ell} \delta_{i' \ell'} + \delta_{i i'} \delta_{k k'} \delta_{\ell \ell'} + \delta_{i \ell'} \delta_{k k'} \delta_{\ell i'}) \\
= & \frac{p_n - d}{n^3} \{ (p_n - d) n + 2 (p_n - d) + 2 n + 4 \},
\end{align*}
where the second-to-last equality uses Isserlis' theorem. Consequently, the variance of $ y_{n,jj} $ is
\[ 
\mathrm{Var}(y_{n,jj}) = \frac{2(p_n - d)}{n^3} \{ 2 + (p_n - d) + n \}. 
\]
Hence, the moments of $ t_{n,jj} := (n/p_n) y_{n,jj} $ are $\mathrm{E}(t_{n,jj}) = 1 - d/p_n = 1 + o(1)$ and
\begin{align*}
    \mathrm{Var}(t_{n,jj}) = 2\left(1 - \frac{d}{p_n} \right) \left\{ \frac{2}{n^2} + \frac{p_n - d}{n^2} + \frac{1}{n} \right\}  = o(1).
\end{align*}
The first claim now follows and the second one is straightforwardly verified to be true in a like manner.
\end{proof}

\begin{proof}[Proof of Theorem \ref{theo:goes_to_zero}]
Due to centering we may WLOG assume that $\mu_n = 0$ for all $n \in \mathbb{N}$. Moreover, as our main claim depends on $S_n$ only through its eigenvalues, we may, again WLOG, assume that $\Sigma_n = \mathrm{diag}(\lambda_{n1}, \ldots , \lambda_{nd}, \sigma^2, \ldots , \sigma^2)$. Finally, as the left-hand side of our main claim is invariant under scaling of the observations, we may WLOG assume that $\sigma^2 = 1$. 

Denoting $n_0 := n - 1$, we have that $S_n = \Sigma_n^{1/2} W_n \Sigma_n^{1/2} $ where $W_n \sim \mathcal{W}_{p_n}\{ n_0^{-1} I_{p_n}, n_0 \}$ is the sample covariance matrix of a sample of size $n$ from the $p_n$-variate standard normal distribution. Denote then $\Lambda_n = \mathrm{diag}(\lambda_{n1}, \ldots , \lambda_{nd})$ and partition $S_n$ and $W_n$ as
\begin{align*}
    S_n = \begin{pmatrix}
    S_{n,11} & S_{n,12} \\
    S_{n,21} & S_{n,22}
    \end{pmatrix}
    = 
    \begin{pmatrix}
    \Lambda_n^{1/2} W_{n,11} \Lambda_n^{1/2} & \Lambda_n^{1/2} W_{n,12} \\
    W_{n,21} \Lambda_n^{1/2} & W_{n,22}
    \end{pmatrix}
\end{align*}
where the matrices $S_{n, 11}$ and $W_{n,11}$ are of the size $d \times d$. Then $W_{n,22} \sim \mathcal{W}_{r_n}\{ n_0^{-1} I_{r_n}, n_0 \}$, where $r_n := p_n - d$, and the Schur complement $S_{n, 22 \cdot 1}$ satisfies
\begin{align*}
    S_{n, 22 \cdot 1} := S_{n, 22} - S_{n, 21} S_{n, 11}^{-1} S_{n, 12} = W_{n, 22 \cdot 1} \sim \mathcal{W}_{r_n}\{ n_0^{-1} I_{r_n}, n_0 - d \},
\end{align*}
where the distribution of $W_{n, 22 \cdot 1}$ follows from Theorem 3.4.6 in \cite{mardia1979multivariate}. Consequently, $G_n := \{ n_0/(n_0 - d) \} S_{n, 22 \cdot 1} \sim \mathcal{W}_{r_n}\{ (n_0 - d)^{-1} I_{r_n}, n_0 - d \}$, implying that $m_{2,r_n}(S_{n, 22 \cdot 1})/ m_{1,r_n}(S_{n, 22 \cdot 1})^2 = m_{2,r_n}(G_n)/ m_{1,r_n}(G_n)^2$. Hence, by Theorem 3.7 in \cite{birke2005note}, we have
\begin{align}\label{eq:schur_complement_distribution}
    (n - d - 1) \left\{ \frac{m_{2,r_n}(S_{n, 22 \cdot 1})}{m_{1,r_n}(S_{n, 22 \cdot 1})^2} - 1 \right\} - r_n \rightsquigarrow \mathcal{N}(1, 4),
\end{align}
regardless of which of the two asymptotic regimes we are in. Note also that, as $G_n$ is of the size $r_n \times r_n$, the notation $m_{k, r_n}(G_n)$ simply refers to the $k$th sample moment of its eigenvalues.

Consider next the regime where $p_n /n \rightarrow 0 $ and assume that
\begin{align}\label{eq:eigenvalue_moment_convergence}
    m_{k,r_n}(S_{n, 22 \cdot 1}) = m_{k,r_n}(S_{n}) + o_p(1/n),
\end{align}
for $k = 1, 2$. Then, the difference
\begin{align}\label{eq:test_difference}
\begin{split}
    & (n - d - 1) \left\{ \frac{m_{2,r_n}(S_n)}{m_{1,r_n}(S_n)^2} - \frac{m_{2,r_n}(S_{n, 22 \cdot 1})}{m_{1,r_n}(S_{n, 22 \cdot 1})^2} \right\}\\
    =& (n - d - 1) \frac{m_{2,r_n}(S_n) m_{1,r_n}(S_{n, 22 \cdot 1})^2 - m_{2,r_n}(S_{n, 22 \cdot 1}) m_{1,r_n}(S_n)^2}{m_{1,r_n}(S_{n, 22 \cdot 1})^2 m_{1,r_n}(S_n)^2},
\end{split}
\end{align}
is easily checked to be of the order $o_p(1)$ using \eqref{eq:eigenvalue_moment_convergence} and the results following from Section 2 in \cite{birke2005note} that $m_{1,r_n}(S_{n, 22 \cdot 1}) \rightarrow_p 1$ and $m_{2,r_n}(S_{n, 22 \cdot 1}) \rightarrow_p 1$. Hence, the first claim of the theorem follows from \eqref{eq:schur_complement_distribution}.

Similarly, in the regime that $p_n /n \rightarrow \infty $ and $p_n/(n \sqrt{\lambda_{nd}}) \rightarrow 0$, assume that
\begin{align}\label{eq:eigenvalue_moment_convergence_2}
\begin{split}
    m_{1,r_n}(S_{n, 22 \cdot 1}) &= m_{1,r_n}(S_{n}) + o_p(1/p_n),\\
    m_{2,r_n}(S_{n, 22 \cdot 1}) &= m_{2,r_n}(S_{n}) + o_p(1/n).
\end{split}
\end{align}
Then, the difference \eqref{eq:test_difference} can similarly be shown to be of the order $o_p(1)$ (proving the second claim of the theorem). Note that in this case we require a faster convergence from the first moment since, by Section 2 of \cite{birke2005note} we have again $m_{1,r_n}(S_{n, 22 \cdot 1}) \rightarrow_p 1$ but the second moment behaves as $m_{2,r_n}(S_{n, 22 \cdot 1}) - (n_0 - d)(r_n + 1)/n_0^2 \rightarrow_p 1$

Thus, we next establish \eqref{eq:eigenvalue_moment_convergence} for $k = 1, 2$ and \eqref{eq:eigenvalue_moment_convergence_2}, starting from the former. As $p_n/n \rightarrow 0$, we may without loss of generality assume $n > p_n$, implying that $S_n$ is almost surely positive definite. Now, we have for $ S_{n, 11 \cdot 2} := S_{n, 11} -  S_{n, 12} S_{n, 22}^{-1} S_{n, 21} $ that,
\begin{align}\label{eq:inequality_chain}
\begin{split}
    & \phi_d^{-1}(S_{n, 11 \cdot 2})\\
    =& \phi_1\{(S_{n, 11} -  S_{n, 12} S_{n, 22}^{-1} S_{n, 21})^{-1}\}\\
    =& \phi_1(S_{n, 11}^{-1} + S_{n, 11}^{-1} S_{n, 12} S_{n, 22 \cdot 1}^{-1} S_{n, 21} S_{n, 11}^{-1}) \\
    \leq& \phi_1(\Lambda_n^{-1/2} W_{n,11}^{-1} \Lambda_n^{-1/2}) + \phi_1(\Lambda_n^{-1/2} W_{n, 11}^{-1} W_{n, 12} W_{n, 22 \cdot 1}^{-1} W_{n, 21} W_{n, 11}^{-1} \Lambda_n^{-1/2}) \\
    \leq& \phi^2_1(\Lambda_n^{-1/2}) \phi_1(W_{n,11}^{-1}) + \phi^2_1(\Lambda_n^{-1/2}) \phi_1^2(W_{n,11}^{-1}) \phi_1(W_{n, 12} W_{n, 22 \cdot 1}^{-1} W_{n, 21}),
\end{split}
\end{align}
where the second equality follows from the Woodbury matrix identity, the first inequality uses Weyl's inequality and the second inequality follows from the sub-multiplicativity of the spectral norm. Now, Assumption \ref{assu:eigenvalues} guarantees that $ \phi^2_1(\Lambda_n^{-1/2}) = \lambda_{nd}^{-1} \rightarrow 0$ and, since $W_{n, 11} \rightarrow_p I_d$, we further have, by the continuity of eigenvalues, that $\phi_1(W_{n,11}^{-1}) \rightarrow_p 1$. Write then,
\begin{align*}
    \phi_1(W_{n, 12} W_{n, 22 \cdot 1}^{-1} W_{n, 21}) &= \| W_{n, 12} W_{n, 22 \cdot 1}^{-1} W_{n, 21} \|_2\\
    &\leq \| W_{n, 12}  \|_2^2 \| W_{n, 22 \cdot 1}^{-1} \|_2\\
    &\leq \mathrm{tr}( W_{n, 12} W_{n, 21} ) \phi_{r_n}^{-1}( W_{n, 22 \cdot 1} ),
\end{align*}
where $\| \cdot \|_2$ denotes the spectral norm. Now, since $ G_n = \{ n_0/(n_0 - d) \} W_{n, 22 \cdot 1} \sim \mathcal{W}_{r_n}\{ (n_0 - d)^{-1} I_{r_n}, n_0 - d \} $, we have by the discussion after Theorem 1.1 in \cite{rudelson2009smallest} that
\begin{align*}
    \mathbb{P}\left\{\phi_{r_n}(G_n) \leq \left(1 -\sqrt{\frac{r_n}{n_0 - d}} - \frac{t}{\sqrt{n_0 - d}}\right)^2\right\} \leq e^{-t^2/2},
\end{align*}
for all $t > 0$. Substituting $t = (1/2) \sqrt{n_0 - d} - \sqrt{r_n}$ (which is positive for a large enough $n$), gives,
\begin{align*}
    \mathbb{P}\left\{\phi_{r_n}(G_n) \leq  1/4 \right\} \leq \exp[-\{ (1/2) \sqrt{n_0 - d} - \sqrt{r_n}\}^2/2] \rightarrow 0.
\end{align*}
Hence,
\begin{align}\label{eq:bounded_final_inverse}
    \phi_{r_n}^{-1}( W_{n, 22 \cdot 1} ) = \frac{n_0}{n_0 - d} \frac{1}{\{\phi_{r_n}(G_n) - 1/4\} + 1/4} = \mathcal{O}_p(1),
\end{align}
where the final step follows as $\phi_{r_n}(G_n) - 1/4$ is positive with probability approaching one. Finally, by Lemma \ref{lem:wishart_2}, we have that $\mathrm{tr}( W_{n,12} W_{n,21} ) = \mathcal{O}_p( p_n/n ) = o_p(1)$ and plugging all these in to \eqref{eq:inequality_chain}, we obtain that $ 0 < \phi_d^{-1}(S_{n, 11 \cdot 2}) \leq o_p(1) $ (where the first inequality holds a.s. by the positive-definiteness of the Schur complement). This, in conjunction with the fact that $\phi_1(S_{n, 22 \cdot 1}) \rightarrow_p 1$, implied by Theorem 2 in \cite{karoui2003largest}, lets us to conclude that $ \mathbb{P}\{ \phi_d(S_{n, 11 \cdot 2}) > \phi_1(S_{n, 22 \cdot 1}) \} \rightarrow 1 $ as $n \rightarrow \infty$, and, in the sequel, we restrict our attention to this event, allowing us to apply Theorem 3 in \cite{schott2006high}, equation (17) of which yields,
\begin{align}\label{eq:schott_inequality_1}
\begin{split}
    0 &\leq m_{1,r_n}(S_{n, 22 \cdot 1}) - m_{1,r_n}(S_{n}) \\
    &\leq \frac{\phi^2_1(S_{n, 22 \cdot 1})}{r_n \{\phi^{-1}_1(S_{n, 22 \cdot 1}) - \phi^{-1}_d(S_{n, 11 \cdot 2}) \}}  \mathrm{tr}(S_{n, 11}^{-1} S_{n, 12} S_{n, 22 \cdot 1}^{-2} S_{n, 21} S_{n, 11}^{-1}) \\
    &\leq r_n^{-1} \{ 1 + o_p(1) \} \| \Lambda_n^{-1/2} \|^2 \| W_{n, 11}^{-1} \|^2 \| W_{n, 12} W_{n, 22 \cdot 1}^{-2} W_{n, 21} \| \\
    &= o_p(1/p_n) \| W_{n, 12} W_{n, 22 \cdot 1}^{-2} W_{n, 21} \|.
\end{split}
\end{align}
Let the singular value decomposition of $W_{n, 21}$ be $ W_{n, 21} = R_n D_n T_n' $. Then,
\begin{align*}
    \| W_{n, 12} W_{n, 22 \cdot 1}^{-2} W_{n, 21} \|^2 &\leq \| D_n \|^4 \| R_n' W_{n, 22 \cdot 1}^{-2} R_n \|^2 \\
    &= \{ \mathrm{tr}(W_{n, 12} W_{n, 21}) \}^2 \| R_n' W_{n, 22 \cdot 1}^{-2} R_n \|^2 \\
    &= \mathcal{O}_p(p_n^2/n^2) \sum_{j = 1}^d \phi_j^2(R_n' W_{n, 22 \cdot 1}^{-2} R_n) \\
    &\leq \mathcal{O}_p(p_n^2/n^2) \sum_{j = 1}^d \phi_j^2(W_{n, 22 \cdot 1}^{-2}) \\
    &\leq \mathcal{O}_p(p_n^2/n^2) d \phi_1^2(W_{n, 22 \cdot 1}^{-2})\\
    &= \mathcal{O}_p(p_n^2/n^2) \phi^{-4}_{r_n}(W_{n, 22 \cdot 1})\\
    &= \mathcal{O}_p(p_n^2/n^2),
\end{align*}
where the second equality follows from Lemma \ref{lem:wishart_2}, the second inequality from the Poincar\'e separation theorem and the final equality from \eqref{eq:bounded_final_inverse}. Plugging this in to \eqref{eq:schott_inequality_1} then establishes \eqref{eq:eigenvalue_moment_convergence} for $k=1$.

To show the same for $k = 2$, we apply equation (18) from Theorem 3 in \cite{schott2006high} to obtain
\begin{align*}
    0 &\leq m_{2,r_n}(S_{n, 22 \cdot 1}) - m_{2,r_n}(S_{n}) \\
    &\leq \frac{2 \phi^4_1(S_{n, 22 \cdot 1})}{r_n}  \left\{ 1 + \frac{ \phi^{-1}_d(S_{n, 11 \cdot 2})}{\phi^{-1}_1(S_{n, 22 \cdot 1}) - \phi^{-1}_d(S_{n, 11 \cdot 2})} \right\} \mathrm{tr}(S_{n, 11}^{-1} S_{n, 12} S_{n, 22 \cdot 1}^{-2} S_{n, 21} S_{n, 11}^{-1}),
\end{align*}
where arguing as in the case $k = 1$ shows that the right-hand side is bounded by a $o_p(1/n)$-quantity, concluding the proof of the case where $p_n/n \rightarrow 0$.

For the second claim, we, without loss of generality, assume that $p_n > n$, implying that the rank of $S_{n, 22 \cdot 1}$ is almost surely $n_0 - d$. Denote then any of its eigendecompositions by $ S_{n, 22 \cdot 1} = Q_n \Delta_n Q_n' $ where $Q_n$ is a $r_n \times (n_0 - d)$ matrix with orthonormal columns and $\Delta_n$ contains the almost surely positive $n_0 - d$ eigenvalues. Our aim is to use Corollary 3 of \cite{schott2006high} and, for that, we first show that $ \mathbb{P}\{ \phi_d(\tilde{S}_{n, 11 \cdot 2}) > \phi_1(S_{n, 22 \cdot 1}) \} \rightarrow 1 $ as $n \rightarrow \infty$, where $\tilde{S}_{n, 11 \cdot 2} := S_{n, 11} -  S_{n, 12} Q_n (Q_n' S_{n, 22} Q_n)^{-1} Q_n' S_{n, 21}$. Now, the inverse of $ \tilde{S}_{n, 11 \cdot 2} $ is $ S_{n, 11}^{-1} + S_{n, 11}^{-1} S_{n, 12} Q_n \Delta_n^{-1} Q_n' S_{n, 21} S_{n, 11}^{-1} $ and, proceeding as in \eqref{eq:inequality_chain}, we see that $\phi_d^{-1}(\tilde{S}_{n, 11 \cdot 2})$ has the upper bound,
\begin{align*}
    \phi^2_1(\Lambda_n^{-1/2}) \phi_1(W_{n,11}^{-1}) + \phi^2_1(\Lambda_n^{-1/2}) \phi_1^2(W_{n,11}^{-1}) \phi_1(W_{n, 12} Q_n \Delta_n^{-1} Q_n' W_{n, 21}),
\end{align*}
where the final leading eigenvalue has, by the Poincar\'e separation theorem, the upper bound $ \mathrm{tr}(W_{n, 12} W_{n, 21}) \phi^{-1}_{n_0 - d} (\Delta_n) $. Now, as in the proof of the first claim, \cite{rudelson2009smallest} can be used to show that $\phi^{-1}_{n_0 - d} (\Delta_n) = \mathcal{O}_p(n/p_n)$. Furthermore, Lemma \ref{lem:wishart_2} shows that $ \mathrm{tr}(W_{n, 12} W_{n, 21}) = o_p(\sqrt{\lambda_{nd}}) $ under our assumptions, finally yielding that,
\begin{align*}
    \phi_d^{-1}(\tilde{S}_{n, 11 \cdot 2}) \leq \frac{1}{\lambda_{nd}} \{ 1 + o_p(1) \} + o_p\{n/(p_n \sqrt{\lambda_{nd}}) \},
\end{align*}
This, in conjunction with the result that $(p_n/n)\phi_1^{-1}(S_{n, 22 \cdot 1}) \rightarrow_p 1$, implied by Theorem 1 in \cite{karoui2003largest}, guarantees now that
\begin{align*}
    (p_n/n)\{\phi_1^{-1}(S_{n, 22 \cdot 1}) - \phi_d^{-1}(\tilde{S}_{n, 11 \cdot 2}) \} \geq 1 - \frac{p_n}{n \lambda_{nd}} \{ 1 + o_p(1) \} + o_p(1) = 1 + o_p(1),
\end{align*}
showing that $ \mathbb{P}\{ \phi_d(\tilde{S}_{n, 11 \cdot 2}) > \phi_1(S_{n, 22 \cdot 1}) \} \rightarrow 1 $, as desired, and allowing us to restrict our attention to the corresponding set and to use Corollary 3 in \cite{schott2006high}. Its first part gives us
\begin{align}\label{eq:schott_inequality_3}
\begin{split}
    0 &\leq m_{1,r_n}(S_{n, 22 \cdot 1}) - m_{1,r_n}(S_{n}) \\
    &\leq \frac{\phi^2_1(S_{n, 22 \cdot 1})}{r_n \{\phi^{-1}_1(S_{n, 22 \cdot 1}) - \phi^{-1}_d(\tilde{S}_{n, 11 \cdot 2}) \}}  \mathrm{tr}(S_{n, 11}^{-1} S_{n, 12} Q_n \Delta_n^{-2} Q_n' S_{n, 21} S_{n, 11}^{-1}) \\
    &\leq \frac{p_n^3}{n^3 r_n \lambda_{nd}} \lambda_{nd} \mathrm{tr}(\Lambda_n^{-1}) \{ d + o_p(1) \}  \| W_{n, 12} Q_n \Delta_n^{-2} Q_n' W_{n, 21} \|,
\end{split}
\end{align}
where $ \lambda_{nd} \mathrm{tr}(\Lambda_n^{-1}) \leq d$. Reasoning similarly as with the first claim of the theorem, we further have
\begin{align*}
    \| W_{n, 12} Q_n \Delta_n^{-2} Q_n' W_{n, 21} \| &\leq \mathrm{tr}(W_{n, 12} W_{n, 21}) \| R_n' Q_n \Delta_n^{-2} Q_n' R_n \|\\
    &\leq o_p(\sqrt{\lambda_{nd}}) \mathcal{O}_p(n^2/p_n^2),
\end{align*}
where $R_n$ again contains the left singular vectors of $W_{n,21}$. Plugging the obtained upper bound to \eqref{eq:schott_inequality_3} then finally gives the first claim of \eqref{eq:eigenvalue_moment_convergence_2} and the second claim is obtained in exactly the same manner but by using the second inequality of Corollary 3 in \cite{schott2006high} instead of the first.
\end{proof}

\begin{proof}[Proof of Theorem \ref{theo:power_for_small_k}]

We begin with the case \textit{i)} and assume first that $\gamma = 0$. The test statistics $g_{n,k} = (n - k - 1) T_{n, k} - (p_n - k)$, $k = 0, \ldots , d - 1$, then satisfy,
\begin{align}\label{eq:gnk_lower_bound}
\begin{split}
    g_{n,k} \geq& (n - d - 1) T_{n, k} - (p_n - d) + (k - d)\\
            =& g_{n, d} + (k - d) + (n - d - 1)(T_{n, k} - T_{n, d}).
\end{split}
\end{align}
We derive a lower bound for the term $T_{n, k} - T_{n, d}$, using the shorthand notations $m_r := m_{r,p_n-d}(S_n) \geq 0$, for $r = 1, 2$, $b_{1k} := \sum_{j = k + 1}^d \phi_j(S_n)$, $b_{2k} := \sum_{j = k + 1}^d \phi_j(S_n)^2$ and $r_n := p_n - d$:
\begin{align}\label{eq:T_decomposition}
\begin{split}
    T_{n, k} - T_{n, d} =& (r_n + d - k) \frac{r_n m_2 + b_{2k}}{(r_n m_1 + b_{1k})^2} - \frac{m_2}{m_1^2} \\
    \geq& \frac{m_2 + b_{2k}/r_n}{(m_1 + b_{1k}/r_n)^2} - \frac{m_2}{m_1^2}\\
    =& \frac{m_1^2 b_{2k}/r_n - 2 m_1 m_2 b_{1k}/r_n - m_2 b_{1k}^2/r_n^2}{(m_1 + b_{1k}/r_n)^2 m_1^2}.
\end{split}
\end{align}
Denoting now $c_{nk} := b_{1k}/r_n \geq 0$, the RHS of \eqref{eq:T_decomposition} splits into three terms, of which the second one satisfies
\begin{align*}
    \frac{ -2 m_2 c_{nk}}{(m_1 + c_{nk})^2 m_1} = -2 \frac{m_2}{m_1} \left\{ \frac{1}{m_1 + c_{nk}} - \frac{m_1}{(m_1 + c_{nk})^2} \right\} \geq -2 \frac{m_2}{m_1^2},  
\end{align*}
where the RHS is $-2 + o_p(1)$ as $m_1, m_2 \rightarrow_p 1 $ by \eqref{eq:eigenvalue_moment_convergence} and the formulas in Section 2 of \cite{birke2005note}. Similarly, the third term on the RHS of \eqref{eq:T_decomposition} has
\begin{align*}
    \frac{ -m_2 c_{nk}^2}{(m_1 + c_{nk})^2 m_1^2} \geq -\frac{m_2}{m_1^2} = - 1 + o_p(1). 
\end{align*}
Observe then that the power mean inequality states that $b_{2k} \geq (d - k)^{-1} b_{1k}^2$. Thus, we have for the first term on the RHS of \eqref{eq:T_decomposition} that, 
\begin{align*}
    \frac{b_{2k}/r_n}{(m_1 + b_{1k}/r_n)^2} \geq \frac{(d - k)^{-1} b_{1k}^2/r_n}{(m_1 + b_{1k}/r_n )^2} = \frac{(d - k)^{-1} }{ (m_1 \sqrt{r_n}/b_{1k} + 1/\sqrt{r_n} )^2}.
\end{align*}
Now, $b_{1k} \geq (d - k) \phi_d(S_n) \geq (d - k) \phi_d(S_{n, 11})$ where the second inequality uses Lemma 2 in \cite{schott2006high}. Moreover, as the spectra of $RT$ and $TR$ are equal for any two square matrices $T, R$, Lemma 3 in \cite{schott2006high} gives $\phi_d(S_{n, 11}) = \phi_d(\Lambda_n^{1/2} W_{n, 11} \Lambda_n^{1/2}) = \phi_d(\Lambda_n W_{n, 11}) \geq \lambda_{nd} \phi_d(W_{n, 11})$, a.s., where $\phi_d(W_{n, 11}) \rightarrow_p 1$ by the continuity of eigenvalues. Consequently,
\begin{align*}
    \frac{b_{2k}/r_n}{(m_1 + b_{1k}/r_n)^2} \geq \frac{(d - k)^{-1} }{\{(d - k)^{-1} m_1 \phi_d^{-1}(W_{n, 11}) \sqrt{r_n}/\lambda_{nd} + 1/\sqrt{r_n} \}^2},
\end{align*}
where the RHS is, by our assumption that $p_n/\lambda_{nd}^2 \rightarrow 0$, of the form  $a_1/a_{n2}^2$ for some constant $a_1 > 0$ and a sequence $a_{n2}$ of random variables such that $a_{n2} \rightarrow_p 0$.

Plugging now everything in to \eqref{eq:gnk_lower_bound} and using the fact that  $g_{n, d} = \mathcal{O}_p(1)$ (by Theorem \ref{theo:goes_to_zero}), gives,
\begin{align*}
    g_{n,k}/n \geq& g_{n, d}/n + (k - d)/n + \frac{n - d - 1}{n} \{ -3 + o_p(1) + a_{1}/a_{n2}^2 \}\\
    =& -3 + o_p(1) + a_{n3}/a_{n2}^2,
\end{align*}
where $a_{n3} := a_1 \{1 - ( d + 1 )/n \} \rightarrow a_1 > 0$.

Write now $z_n = \Omega_\infty(1)$ if a sequence of random variables $z_n$ satisfies $\mathbb{P}(z_n \leq M) \rightarrow 0$ for all $M > 0$ as $n \rightarrow \infty$. Then $a_{n3}/a_{n2}^2 = \Omega_\infty(1)$ as can be seen by writing, for an arbitrary $M > 0$,
\begin{align*}\label{eq:conditioning_op}
\begin{split}
    & \mathbb{P}(a_{n3}/a_{n2}^2 \leq M) \\
    =& \mathbb{P}(a_{n3}/a_{n2}^2 \leq M \mid a_{n3} \geq a_1/2) \mathbb{P}(a_{n3} \geq a_1/2)\\
    +& \mathbb{P}(a_{n3}/a_{n2}^2 \leq M \mid a_{n3} < a_1/2) \mathbb{P}(a_{n3} < a_1/2)\\
    \leq& \mathbb{P}\{ a_1/(2 a_{n2}^2) \leq M \mid a_{n3} \geq a_1/2\} \mathbb{P}(a_{n3} \geq a_1/2) + o(1) \\
    \leq& \mathbb{P}\{ a_{n2}^2 \geq a_1/(2M) \} + o(1)\\
    =&o(1).
\end{split}
\end{align*}
The proof for \textit{i)} is now finished once we show that $\mathcal{O}_p(1) + \Omega_\infty(1) = \Omega_\infty(1)$. Letting $a_n, b_n$ be arbitrary sequences of random variables with the orders $a_n = \mathcal{O}_p(1)$ and $b_n = \Omega_\infty(1)$, fix $\varepsilon, M > 0$ and take $C, n_0 > 0$ to be such that $\mathbb{P}(|a_n| \geq C) \leq \varepsilon/2$ for all $n > n_0$. Moreover, let $n_1$ be such that for all $n > n_1$, we have $\mathbb{P}(b_n \leq M + C) \leq \varepsilon/2$. Then, for $n > \max \{ n_0, n_1 \}$, we have
\begin{align*}
     & \mathbb{P}(a_n + b_n \leq M) \\
    =& \mathbb{P}(a_n + b_n \leq M \mid a_n \geq -C) \mathbb{P}( a_n \geq -C )\\
    +& \mathbb{P}(a_n + b_n \leq M \mid a_n < -C) \mathbb{P}( a_n < -C )\\
    \leq& \mathbb{P}(-C + b_n \leq M \mid a_n \geq -C) \mathbb{P}( a_n \geq -C ) + \varepsilon/2 \\
    \leq& \mathbb{P}(b_n \leq M + C) + \varepsilon/2\\
    \leq& \varepsilon,
\end{align*}
proving the claim.

Moving our attention to the case $\gamma \in (0, \infty)  \setminus \{ 1 \}$ of part \textit{i)} of the theorem, exactly the same proof as was used for $\gamma = 0$ suffices also here after the modification of a single part: To see that $m_2/m_1^2$ converges in probability to a constant, we use (21) from \cite{schott2006high} in conjunction with Lemma 2.2 in \cite{wang2013sphericity} to obtain $m_1 \rightarrow_p~1$ and $m_2 \rightarrow_p 1 + \gamma$.


Finally, to obtain part \textit{ii)} of the claim, we make the following modifications to the proof: To control $m_2/m_1^2$, the equation \eqref{eq:eigenvalue_moment_convergence_2} together with the formulas in Section 2 of \cite{birke2005note} give $m_1 = 1 + o_p(1)$ and $m_2 = \mathcal{O}_p(p_n/n)$. Hence, we get the following lower bound for $g_{n, k}/n$:
\begin{align*}
    & o_p(1) + \{ 1 + o(1) \} \frac{p_n}{n} \left[ \mathcal{O}_p(1) + (n/p_n) \frac{(d - k)^{-1} }{\left\{ \frac{m_1 \sqrt{r_n}}{(d - k) \phi_d(W_{n, 11}) \lambda_{nd}} + \frac{1}{\sqrt{r_n}} \right\}^2} \right] \\
    =& o_p(1) + \{ 1 + o(1) \} \frac{p_n}{n} \left\{ \mathcal{O}_p(1) + \Omega_\infty(1) \right\},
\end{align*}
where the equality follows from our assumption that $p_n/(\sqrt{n} \lambda_{nd}) \rightarrow 0$. The claim now follows using our earlier statement that $ \mathcal{O}_p(1) + \Omega_\infty(1) = \Omega_\infty(1)$.


\end{proof}

\begin{proof}[Proof of Corollary \ref{cor:dimension_estimation}]
We have,
\begin{align*}
    \mathbb{P}(\hat{d} = d) =& \mathbb{P} \left( \bigcap_{k = 0}^{d - 1} \{ g_{n, k} > c_n \} \cap \{ g_{n, d} \leq c_n \} \right) \\
    =& 1 - \mathbb{P} \left( \bigcup_{k = 0}^{d - 1} \{ g_{n, k} \leq c_n \} \cup \{ g_{n, d} > c_n \} \right) \\
    \geq& 1 - \sum_{k = 0}^{d - 1} \mathbb{P}(g_{n, k} \leq c_n) - \mathbb{P}(g_{n, d} > c_n)\\
    =& 1 - \sum_{k = 0}^{d - 1} \mathbb{P}\{ g_{n, k}/n \leq \mathcal{O}(1) \} + o(1)\\
    =& 1 + o(1),
\end{align*}
where $\mathbb{P}(g_{n, d} > c_n) = o(1)$ follows from $g_{n, d} = \mathcal{O}_p(1)$ (shown in Theorem \ref{theo:goes_to_zero}) and the final equality follows from $\mathbb{P}\{ g_{n, k}/n \leq \mathcal{O}(1) \} = o(1)$, $k = 0, \ldots, d - 1$, (shown in Theorem \ref{theo:power_for_small_k}).

\end{proof}

\bibliographystyle{apalike}
\bibliography{references}

\end{document}